\newcommand{\comment}[1]{}
\newcommand{\bC}{{\mathbb C}}
\newcommand{\bR}{{\mathbb R}}
\newcommand{\R}{{\mathbb R}}
\def\H{{\mathcal H}}
\def\la{\langle}
\def\ra{\rangle}
\def\BMO{B\! M\! O}
\def\BMOA{B\! M\! O\! A}
\newcounter{rea}
\begin{document}

\title[]{Factorization of some Hardy type spaces of holomorphic functions}      
\author{\bf{Aline Bonami and Luong Dang Ky}}
\address{MAPMO-UMR 6628,
D\'epartement de Math\'ematiques, Universit\'e d'Orleans, 45067
Orl\'eans Cedex 2, France} 
\email{{\tt
aline.bonami@univ-orleans.fr}}
\address{Department of Mathematics, University of Quy Nhon, 170 An Duong Vuong, Quy Nhon, Binh Dinh, Viet Nam} 
\email{{\tt dangky@math.cnrs.fr}}

\keywords{BMOA, holomorphic functions, Hardy spaces}
\subjclass[2010]{47B35(32A35, 42B35)}

\begin{abstract}
We prove that the pointwise product of two holomorphic functions of the upper half-plane, one in the Hardy space $\mathcal H^1$, the other one in its dual, belongs to a Hardy type space. Conversely, every holomorphic function in this space can be written as such a product. This generalizes previous characterization in the context of the unit disc.
\end{abstract}

\maketitle
\newtheorem{theorem}{Theorem}[section]
\newtheorem{lemma}{Lemma}[section]
\newtheorem{proposition}{Proposition}[section]
\newtheorem{remark}{Remark}[section]
\newtheorem{corollary}{Corollary}[section]
\newtheorem{definition}{Definition}[section]
\newtheorem{example}{Example}[section]
\numberwithin{equation}{section}
\newtheorem{Theorem}{Theorem}[section]
\newtheorem{Lemma}{Lemma}[section]
\newtheorem{Proposition}{Proposition}[section]
\newtheorem{Remark}{Remark}[section]
\newtheorem{Corollary}{Corollary}[section]
\newtheorem{Definition}{Definition}[section]
\newtheorem{Example}{Example}[section]
\newtheorem*{theoremjj}{Theorem J-J}
\newtheorem*{theorema}{Theorem A}
\newtheorem*{theoremb}{Theorem B}
\newtheorem*{theoremc}{Theorem C}

\section{Introduction }

Let $\bC_+$ be the upper half-plane in the complex plane. We recall that, for $p>0$,  the holomorphic Hardy space $\mathcal H^p_a(\bC_+)$ is defined as the space of holomorphic functions  $f$ such that
\begin{equation}
 \|f\|_{\mathcal{H}^p_a}^p:=\sup_{y>0}\int_{-\infty}^{+\infty}|f(x+iy)|^pdx <\infty.
 \end{equation}
 By Fefferman's Theorem, the dual space of $\mathcal H^1_a(\bC_+)$ is the space $\BMOA(\bC_+)$. Here we are not interested by the definition of the Hermitian scalar product $\la f, g\ra$ when $f$ is in $\mathcal H^1_a(\bC_+)$ and $g$ is in $\BMOA(\bC_+)$, but by the pointwise product $fg$ of the two holomorphic functions. We identify the space of such products. This has already been done in the case of the unit disc in \cite{BIJZ}, where one finds a Hardy-Orlicz space. The novelty here is the fact that one has also to take into account the behavior at infinity. The space under consideration belongs to the family of Hardy spaces of Musielak type. It has been introduced in the setting of real Hardy spaces in \cite{BGK}.

Before stating the theorem, let us recall that $g$ is in $BMOA(\bC_+)$ if and only if one of the two equivalent conditions are satisfied.{\sl
\begin{enumerate}[(i)]
\item $|g'(x+iy)|^2 y \, dx\, dy$ is a Carleson measure.
\item $g$ can be written as
\begin{equation}\label{Bonami 1}
g(x+ iy) = P_y*g_0(x),
\end{equation}
  where $g_0$ belongs to $BMO(\mathbb R)$ and its Fourier transform is supported in $ [0,\infty)$.
\end{enumerate}} Here $P_y$ is the Poisson kernel.
Next  we define  $L^{\log}(\bR)$, as in \cite{BGK}, as the space of measurable functions $f$ such that
$$\int_{\mathbb R} \frac {|f(x)|}{ \log(e+|x|) +\log (e+ |f(x)|)}dx <\infty.$$
This is a particular case  of a Musielak-Orlicz space, defined as  the space of measurable functions $f$ such that
$$\int_{\bR} \theta(x, |f(x)|) dx <\infty$$
under adequate assumptions on $\theta$ (see \cite{Ky} for details). For $f\in L^{\log}(\bR)$, we define the "norm" by
$$\|f\|_{L^{\log}}:=\inf \left\{\lambda>0 : \int_{\bR} \theta(x, |f(x)|/\lambda) dx \leq 1\right\},$$
with $\theta(x, t)=t(1+\log_+(|x|) +\frac 12\log_+ ( t))^{-1}$. The choice of this particular function, whose ratio with $ \log(e+|x|) +\log (e+ |f(x)|)$ is bounded above and below, guarantees that $t\mapsto \theta(x, t^2)$ is a convex function, which will be useful later.

We then define $\mathcal H^{\log}_a(\bC_+)$ as the space of holomorphic functions $f$ such that
$$\|f\|_{\mathcal H^{\log}_a}= \sup_{y>0} \|f(\cdot + iy)\|_{L^{\log}}<\infty.$$

Our main result is the following one.

\begin{theorem}\label{the main theorem}
The product of $f\in \mathcal H^1_a(\bC_+)$ and $g\in BMOA(\bC_+)$ belongs to $\mathcal H^{\log}_a(\bC_+)$. Moreover, every function in  $\mathcal H^{\log}_a(\bC_+)$ can be written as such a product. In other words,
$$\mathcal H^1_a(\bC_+) .  BMOA(\bC_+) = \mathcal H^{\log}_a(\bC_+).$$
\end{theorem}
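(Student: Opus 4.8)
The plan is to prove the two inclusions separately, treating the forward inclusion $\mathcal H^1_a(\bC_+)\cdot\BMOA(\bC_+)\subset\mathcal H^{\log}_a(\bC_+)$ as a slicewise generalized H\"older inequality, and the factorization $\mathcal H^{\log}_a(\bC_+)\subset\mathcal H^1_a(\bC_+)\cdot\BMOA(\bC_+)$ by constructing an explicit nonvanishing $\BMOA$ factor and dividing.

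For the forward inclusion, I would fix $f\in\mathcal H^1_a(\bC_+)$, $g\in\BMOA(\bC_+)$ and estimate the $L^{\log}$ norm of each horizontal slice $x\mapsto f(x+iy)g(x+iy)$ uniformly in $y>0$. Writing the slices as Poisson integrals of the boundary data, one has $\|f(\cdot+iy)\|_{H^1(\bR)}\le\|f\|_{\mathcal H^1_a}$ and, since convolution with $P_y$ does not increase the BMO norm, $\|g(\cdot+iy)\|_{BMO(\bR)}\le C\|g\|_{\BMOA}$ by characterization (ii). It then suffices to prove the real-variable bound $\|uv\|_{L^{\log}}\le C\|u\|_{H^1(\bR)}\|v\|_{BMO(\bR)}$ and take the supremum over $y$. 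The mechanism is to split $\bR$ according to whether $\log_+|v|$ dominates: where $|v|$ is large one invokes the exponential integrability of BMO functions (John--Nirenberg), while the remaining part is controlled directly by $\int|u|$, the convexity of $t\mapsto\theta(x,t^2)$ noted above being what makes the bookkeeping work. This is essentially the inequality established in \cite{BGK}, \cite{Ky}, which I would quote rather than reprove.

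For the factorization, given $F\in\mathcal H^{\log}_a(\bC_+)$ with $\|F\|_{\mathcal H^{\log}_a}\le1$ and $F\not\equiv0$, the goal is a nonvanishing $g\in\BMOA(\bC_+)$ whose real part dominates the weight $\omega(x+iy):=1+\log_+|x|+\tfrac12\log_+|F(x+iy)|$, for then $f:=F/g$ is holomorphic and $\int_{\bR}|f(x+iy)|\,dx$ is controlled by $\int_{\bR}\theta(x,|F(x+iy)|)\,dx\le1$, giving $f\in\mathcal H^1_a$. I would build $g=g_1+g_2+1$: the term $g_1(z)=\log(z+i)$ (principal branch) lies in $\BMOA$ and satisfies $\mathrm{Re}\,g_1=\log|z+i|\ge0$ with $\mathrm{Re}\,g_1\approx1+\log_+|x|$, while the additive constant $1$ forces $\mathrm{Re}\,g\ge1$, so $g$ never vanishes and the division is legitimate. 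The term $g_2$ should be holomorphic with $\mathrm{Re}\,g_2\ge0$ comparable to $\log_+|F|$, the natural candidate being the analytic completion $u+i\tilde u$ of the Poisson extension $u$ of the boundary function $\log_+|F|$; subharmonicity of $\log|F|$ then guarantees $\mathrm{Re}\,g_2(x+iy)=u(x+iy)\ge\log_+|F(x+iy)|$ throughout $\bC_+$, which is exactly what the $\mathcal H^1_a$ estimate requires.

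The main obstacle is precisely the verification that $g_2\in\BMOA$, equivalently that the boundary weight $\log_+|F|$ lies in $BMO(\bR)$, and this is where I expect the real work of the paper to lie. The hypothesis $F\in\mathcal H^{\log}_a$ only forces the super-level sets $\{\log_+|F|>\lambda\}$ to have rapidly decaying measure as $\lambda\to\infty$, and such decay does \emph{not} by itself place $\log_+|F|$ in $BMO(\bR)$ (one can prescribe the modulus of an outer function so that $\log_+|F|$ has tall thin spikes), so the naive harmonic extension of $\log_+|F|$ need not produce a $\BMOA$ function. I would therefore replace $\log_+|F|$ by a suitable regularization -- for instance the logarithm of a maximal function, or the weight produced by the real-variable bilinear decomposition of $H^{\log}(\bR)$ from \cite{BGK} -- chosen so that it still dominates $\log_+|F|$ enough to keep $\int_{\bR}|F|/\mathrm{Re}\,g$ finite while satisfying the Carleson-measure condition (i), hence belonging to $\BMOA$. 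Establishing the existence of such a regularized weight, dominating $\log_+|F|$ on the exceptional spikes, is the crux; once $g\in\BMOA(\bC_+)$ is secured, the identity $F=(F/g)\cdot g=f\cdot g$ together with the bound on $f$ completes the proof.
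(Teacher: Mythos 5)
Your overall architecture coincides with the paper's (slicewise product estimate in one direction, division by a constructed non-vanishing $\BMOA$ function in the other), but the proposal has a genuine gap at exactly the point you flag as ``the crux'': you never produce the regularized weight, and that step is the entire content of the factorization. The missing tool is the Coifman--Rochberg theorem: for any $w$ with $Mw$ finite a.e., $\log Mw$ belongs to $BMO(\bR)$ with norm bounded by an absolute constant. The paper applies it to $w=|h_0|^{1/2}$ and takes $b:=\log(e+|\cdot|)+\log(e+M(|h_0|^{1/2}))\in BMO(\bR)$, then $g$ the Poisson integral of $b+iHb$. The square root is what makes the construction close up on both sides: since $|h_0|^{1/2}\in L^{\theta_0}(\bR)$ and $M$ is bounded there, $M(|h_0|^{1/2})$ is finite a.e., so Coifman--Rochberg applies; and the subharmonicity of $|h|^{1/2}$ gives $(h^*)^{1/2}\le CM(|h_0|^{1/2})$ (inequality \eqref{a maximal estimate 3/6}), so that $b$ dominates $\log(e+|x|)+\log(e+(h^*)^{1/2}(x))$ up to constants and $|f_0|=|h_0|/|b+iHb|\lesssim h^*\bigl(\log(e+|x|)+\log(e+(h^*)^{1/2})\bigr)^{-1}$, which is integrable precisely because $h^*\in L^{\log}(\bR)$ by Theorem \ref{maximal characterization}; Theorem \ref{a theorem for boundary value for Hardy spaces} then upgrades $f_0\in L^1$ to $f\in\mathcal H^1_a(\bC_+)$. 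You correctly diagnose that the harmonic extension of $\log_+|F_0|$ itself fails, and you even name ``the logarithm of a maximal function'' as a candidate, but without Coifman--Rochberg (or an equivalent substitute) the argument is incomplete at its decisive step.

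There is also a quieter gap in the forward inclusion. The bilinear estimate you want to quote has the form $\|uv\|_{L^{\log}}\le C\|u\|_{H^1}\|v\|_{BMO^+}$, where $\|v\|_{BMO^+}=\|v\|_{BMO}+\int_{I(0,1)}|v|$; the version with the pure seminorm is false (take $v$ a large constant: the left side grows like $N/\log N$ while the seminorm vanishes). By Lemma \ref{Bonami 4/6} the slice norms $\|g(\cdot+iy)\|_{BMO^+}$ grow like $\log(e+y)$, and this is sharp (e.g.\ $g(z)=\log(z+i)$), so taking the supremum over all $y>0$ of the naive slice estimates diverges. The paper circumvents this by first establishing, for a dense class of $f$ with decay $|f(z)|\lesssim(1+|z|)^{-3}$, that $h=fg\in\mathcal H^1_a(\bC_+)$, hence $h(\cdot+iy)=P_y*h_0$; convexity of $\theta_0$ then shows the $L^{\log}$ norms of $P_y*h_0$ are decreasing in $y$, so the supremum is realized for $y\le 1$, where the $BMO^+$ norms of the slices are uniformly controlled. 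Some version of this reduction (or a weight-aware product estimate uniform in $y$) is needed to make your first paragraph rigorous.
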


As a consequence, by using standard methods, we can identify the class of holomorphic symbols $b$ of Hankel operators $H_b$ that extend into continuous antilinear operators on $\mathcal H_a^1(\bC_+)$. For simplicity we only consider symbols $b$ that are bounded and define
$$ H_b(f)=P(b\overline f)$$
with $P$ the Cauchy operator, which extends the orthogonal projection from $L^2(\mathbb{R})$ onto the subspace of functions whose Fourier transforms are supported in $[0, \infty)$. Here and what in follows, $I(x_0,r)$ stand for the interval $(x_0-r,x_0+r)$ in $\mathbb R$.

\begin{theorem}\label{hankel}
For $b$ a bounded holomorphic function in $\bC_+$, the operator $H_b$ extends into a bounded antilinear operator on $\mathcal H_a^1(\bC_+)$ if and only if $b$ belong to the space $\BMOA^{\log}(\bC_+)$, that is,
\begin{equation}
\sup_{I(x_0, r)}\frac{|\log r|+\log (e+|x_0|) }{r}\int_{T(I(x_0, r)}|\nabla b(x+iy)|^2ydx dy<\infty,
\end{equation}
where the supremum is taken over all intervals $I(x_0,r)\subset \mathbb R$ and $T(I(x_0,r))$ denotes the tent on $I(x_0,r)$.
\end{theorem}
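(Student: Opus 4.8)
The plan is to deduce Theorem~\ref{hankel} from the factorization Theorem~\ref{the main theorem} by a duality argument, which is the ``standard method'' alluded to above. First I would reduce the boundedness of $H_b$ to a statement about a sesquilinear form. Since $H_b$ is antilinear, its boundedness on $\mathcal H^1_a(\bC_+)$ is equivalent, through Fefferman's duality $(\mathcal H^1_a)^*=\BMOA$, to the finiteness of
$$\sup\left\{\,|\la H_b f, g\ra| : \|f\|_{\mathcal H^1_a}\le 1,\ \|g\|_{\BMOA}\le 1\,\right\}.$$
The next step is to evaluate this pairing on a dense class of smooth $f,g$ and extend by density. Because $g$ has Fourier transform supported in $[0,\infty)$ we have $Pg=g$, so the self-adjointness of the Cauchy projection $P$ gives
$$\la H_b f, g\ra = \la P(b\overline f), g\ra = \la b\overline f, g\ra = \int_{\mathbb R} b(x)\,\overline{f(x)g(x)}\,dx.$$
Thus the whole question is governed by the holomorphic product $fg$.

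I would then invoke Theorem~\ref{the main theorem} in both directions. The direct part, namely that $fg\in\mathcal H^{\log}_a$ with $\|fg\|_{\mathcal H^{\log}_a}\lesssim \|f\|_{\mathcal H^1_a}\|g\|_{\BMOA}$, shows that the supremum above is dominated by the norm of the linear functional $h\mapsto \int_{\mathbb R} b\,\overline h$ acting on $\mathcal H^{\log}_a(\bC_+)$. The converse part, that every $h\in\mathcal H^{\log}_a$ factors as $h=fg$ with comparable control of norms, yields the reverse inequality. Consequently, with comparable norms,
$$H_b \text{ extends to a bounded antilinear operator on } \mathcal H^1_a(\bC_+) \iff b\in\left(\mathcal H^{\log}_a(\bC_+)\right)^*.$$
It only remains to identify this dual space.

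Finally, I would identify $(\mathcal H^{\log}_a(\bC_+))^*$ with the space $\BMOA^{\log}(\bC_+)$ defined by the weighted Carleson condition of the statement. Here I expect to follow the template already available for real Hardy spaces of Musielak type: the dual of $H^{\log}(\mathbb R)$ is described in \cite{BGK,Ky} by a generalized $\BMO$ condition in which the normalization $|I|^{-1}$ is replaced by a weight adapted to $\theta$, of size $(|\log r|+\log(e+|x_0|))/r$ on the interval $I(x_0,r)$. Transferring this to holomorphic functions, one replaces the mean oscillation by the Littlewood--Paley square function, exactly as the equivalence (i)$\Leftrightarrow$(ii) recalled above turns $\BMO$ into $\BMOA$, and arrives at the stated characterization in terms of $\int_{T(I(x_0,r))}|\nabla b|^2\,y\,dx\,dy$.

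The main obstacle is precisely this last step. One must verify that the two-parameter weight faithfully encodes both the local logarithmic loss (the $|\log r|$ term, stemming from the $\tfrac12\log_+ t$ in $\theta$) and the decay at infinity (the $\log(e+|x_0|)$ term, stemming from the $\log_+|x|$ in $\theta$), and that the tent-space duality underlying the real-variable computation survives restriction to the holomorphic subspace. Once the dual is identified in this form, combining it with the equivalence displayed above completes the proof.
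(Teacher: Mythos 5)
Your proposal follows essentially the same route as the paper: both reduce the boundedness of $H_b$ to that of the Hankel form $(f,g)\mapsto \la b, fg\ra$ on $\mathcal H^1_a(\bC_+)\times \BMOA(\bC_+)$, apply Theorem \ref{the main theorem} in both directions to convert this into membership of $b$ in the dual of $\mathcal H^{\log}_a(\bC_+)$, and then identify that dual as $\BMOA^{\log}(\bC_+)$ by transferring the known duality for the real Musielak--Orlicz Hardy space $H^{\log}(\R)$ (from \cite{Ky}, via continuity of the Hilbert transform) to the holomorphic setting. The paper gives only this same sketch, so your outline matches it; the final duality step you flag as the main obstacle is precisely the point the paper delegates to \cite{Ky}.
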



\section{The space $\mathcal H^{\log}_a(\bC_+)$}

In this section, we extend  to the space $\mathcal H^{\log}_a(\bC_+)$ those  properties of Hardy spaces that  we will need. Let us first define, more generally, spaces $L^{\varphi}(\R)$ and $\mathcal H^{\varphi}_a(\bC_+)$, with the specific function $\theta$ replaced by $\varphi$. We will only use two other specific functions, namely
\begin{equation}\label{functions}
\theta_0(x, t)= \theta(x, t^2)\qquad \qquad \theta_1(x, t) = \theta(x,t)^2.
\end{equation} Both are convex functions, of upper and lower type $2$. So $\mathcal H^{\theta_0}_a(\bC_+)$ and $\mathcal H^{\theta_1}_a(\bC_+)$ are Banach spaces, while $\mathcal H^{\log}_a(\bC_+)$ is not a normed space.

As usual, the operator $M$ will be the classical Hardy-Littlewood maximal operator.
The nontangential maximal function of  a  function $f$ defined  in $\bC_+$ is given by
\begin{equation}
f^*(x)= \sup_{z\in \Gamma(x)} |f(z)|,
\end{equation}
where $\Gamma(x)= \{z= u +iy\in\bC_+: |u-x|<y\}$.

The next theorem gives the characterization of the space $\mathcal H^{\log}_a(\bC_+)$.

\begin{theorem}\label{maximal characterization}
$f\in \mathcal H^{\log}_a(\bC_+)$ if and only if $f^* \in L^{\log}(\mathbb R)$. Moreover,
$$\|f\|_{\mathcal H^{\log}_a}\sim \|f^*\|_{L^{\log}}.$$
\end{theorem}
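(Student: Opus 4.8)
The plan is to establish the equivalence $f\in\mathcal H^{\log}_a(\bC_+)$ iff $f^*\in L^{\log}(\bR)$ by proving two inequalities between $\|f\|_{\mathcal H^{\log}_a}$ and $\|f^*\|_{L^{\log}}$. The easy direction is that control of the nontangential maximal function dominates the defining quantity. Indeed, for each fixed height $y>0$ one has $|f(x+iy)|\le f^*(x)$ pointwise, because the point $x+iy$ lies in the cone $\Gamma(x)$. Since the Musielak–Orlicz modular $\int_{\bR}\theta(x,|h(x)|/\lambda)\,dx$ is monotone in $|h|$ (as $t\mapsto\theta(x,t)$ is nondecreasing), the modular of $f(\cdot+iy)$ is dominated by that of $f^*$ for every $y$, hence $\|f(\cdot+iy)\|_{L^{\log}}\le\|f^*\|_{L^{\log}}$ uniformly in $y$. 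Taking the supremum gives $\|f\|_{\mathcal H^{\log}_a}\le\|f^*\|_{L^{\log}}$, so $f^*\in L^{\log}$ forces $f\in\mathcal H^{\log}_a$.

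The substantial direction is the converse: starting only from the uniform bound on the slices $f(\cdot+iy)$, I must recover an $L^{\log}$ bound on the full nontangential maximal function $f^*$. First I would recall that a holomorphic function on $\bC_+$ controlled in $\mathcal H^{\log}_a$ has nontangential boundary values $f_0$ a.e.\ on $\bR$ and can be recovered as the Poisson (or Cauchy) integral $f(x+iy)=P_y*f_0(x)$; this is where one shows the slices converge and identifies a boundary trace. The key pointwise tool is the standard estimate controlling the nontangential maximal function of a Poisson extension by the Hardy–Littlewood maximal operator applied to the boundary data, $f^*(x)\lesssim M f_0(x)$, valid for holomorphic (or harmonic) functions whose slices are uniformly bounded in a reasonable sense. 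With this in hand, the problem reduces to showing that $M$ is bounded on $L^{\log}(\bR)$, i.e.\ that $\|Mf_0\|_{L^{\log}}\lesssim\|f_0\|_{L^{\log}}$, and then that $\|f_0\|_{L^{\log}}\lesssim\|f\|_{\mathcal H^{\log}_a}$.

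The main obstacle, and the step I would focus the real work on, is the boundedness of the Hardy–Littlewood maximal operator on the Musielak–Orlicz space $L^{\log}(\bR)$, because the weight $\theta(x,t)=t\bigl(1+\log_+|x|+\tfrac12\log_+ t\bigr)^{-1}$ depends jointly on the spatial variable $x$ and the height $t$, so neither the classical Orlicz nor the weighted-$L^p$ machinery applies off the shelf. I expect this to require verifying that $\theta$ satisfies the relevant Musielak–Orlicz $\mathbb A_\infty$-type condition—controlling how the $x$-dependence of $\theta$ interacts with averages over intervals—so that the general maximal-function theorem for Hardy spaces of Musielak type from \cite{Ky} (or the real-variable analogue in \cite{BGK}) can be invoked. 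I would check that the spatial factor $\log(e+|x|)$ grows slowly enough that averaging over an interval $I$ changes $\theta(\cdot,t)$ only by a bounded multiplicative constant uniformly in $t$, which is precisely the structural feature making $L^{\log}$ amenable to the maximal-function estimates; the logarithmic, hence subpolynomial, nature of this factor is what saves the argument. Once the maximal theorem and the trace identification are secured, combining $f^*\lesssim Mf_0$ with the boundedness of $M$ and the modular comparison yields $\|f^*\|_{L^{\log}}\lesssim\|f\|_{\mathcal H^{\log}_a}$, completing the equivalence and the norm comparison $\|f\|_{\mathcal H^{\log}_a}\sim\|f^*\|_{L^{\log}}$.
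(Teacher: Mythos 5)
Your easy direction is fine and is exactly the paper's ``one implication is obvious.'' The converse, however, contains a genuine gap at precisely the step you identify as the crux: the Hardy--Littlewood maximal operator is \emph{not} bounded on $L^{\log}(\bR)$, and no Musielak--Orlicz $\mathbb{A}_\infty$-type verification can make it so. The function $\theta(x,t)=t\bigl(1+\log_+|x|+\tfrac12\log_+t\bigr)^{-1}$ is of upper and lower type $1$, so $L^{\log}$ is a (slightly enlarged) $L^1$-type space, and $M$ fails on it for the same reason it fails on $L^1$: already for $f_0=\chi_{I(0,1)}$ one has $Mf_0(x)\sim |x|^{-1}$ at infinity, and
$$\int_{|x|>1}\frac{|x|^{-1}}{\log(e+|x|)+\log(e+|x|^{-1})}\,dx\ \sim\ \int_1^{\infty}\frac{dx}{x\log x}\ =\ \infty,$$
so $Mf_0\notin L^{\log}(\bR)$. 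A second, related problem is your intended representation $f(\cdot+iy)=P_y*f_0$: a function in $L^{\log}(\bR)$ need not be locally integrable, so the Poisson integral of the boundary trace of $f$ itself need not even be defined.

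The paper circumvents both difficulties with the classical $\mathcal H^p$, $p\le 1$, device, which is the missing idea here. Since $f$ is holomorphic, $g=|f|^{1/2}$ is subharmonic, and by construction $\sup_{y>0}\|g(\cdot+iy)\|_{L^{\theta_0}}=\|f\|_{\mathcal H^{\log}_a}$, where $\theta_0(x,t)=\theta(x,t^2)$ is convex and of upper and lower type $2$. Adapting the classical arguments to this quadratic-type space, the slice norms are decreasing and there is a boundary value $g_0\in L^{\theta_0}(\bR)$ with the subharmonic majorization $g(x+iy)\le P_y*g_0(x)$; hence $(f^*)^{1/2}(x)\le \sup_{u+iy\in\Gamma(x)}P_y*g_0(u)\le C\,M(g_0)(x)$. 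The maximal operator \emph{is} bounded on $L^{\theta_0}(\bR)$ (this is \cite[Corollary 2.8]{LHY}), and undoing the square root gives $\|f^*\|_{L^{\log}}\le C\|g_0\|_{L^{\theta_0}}\le C\|f\|_{\mathcal H^{\log}_a}$. So your outline can be repaired, but only by replacing the false claim ``$M$ is bounded on $L^{\log}$'' with this square-root/subharmonicity argument carried out in the type-$2$ space $L^{\theta_0}$.
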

\begin{proof}
One implication is obvious. Let us prove the other one. We consider $f\in \mathcal H^{\log}_a(\bC_+)$. We use the fact that $g=|f|^{1/2}$ is a sub-harmonic function and satisfies the inequality
$$ \sup_{y>0} \|g(\cdot + iy)\|_{L^ {\theta_0}}=\|f\|_{\mathcal H^{\log}_a}<\infty.$$
It is easy to adapt to the present situation the classical theorems. Namely, norms $\|g(\cdot + iy)\|_{L^{ \theta_0}}$ are decreasing and there exists a boundary value $g_0\in L^ {\theta_0}(\R)$ such that $g(x+iy)\leq P_y*g_0(x)$. So,
\begin{equation}\label{a maximal estimate 3/6}
(f^*)^{1/2}(x)\leq \sup_{u+iy\in \Gamma(x)} P_y*g_0(u)\leq C M(g_0)(x).
\end{equation}
By the $L^{ \theta_0}$-boundedness of $M$ (see \cite[Corollary 2.8]{LHY}), we obtain that
$$ \|f^*\|_{L^{\log}} \leq C \|g_0\|_{L^{ \theta_0}}\leq C \|f\|_{\mathcal H^{\log}_a}$$
for some uniform constant related with the norm of $M$ in $L^{\theta_0}(\mathbb R)$.
 \end{proof}
 We will also need the next statement.
 \begin{theorem}\label{a theorem for boundary value for Hardy spaces}
  Let $f$ be a function in $\mathcal H^{\log}_a(\bC_+)$. Then $f$ has  nontangential limits almost everywhere. Moreover, if the boundary value function
 $$f_0(x):=\lim_{\Gamma(x) \ni z\to x} f(z)$$
belongs to the space $L^1(\mathbb R)$, then $f$ is in $\mathcal H^1_a(\bC_+)$.

\end{theorem}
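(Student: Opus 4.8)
The plan is to treat the two assertions in turn, both leaning on the subharmonic majorant already used in the proof of Theorem \ref{maximal characterization}: writing $g=|f|^{1/2}$, one has $g(x+iy)\le P_y*g_0(x)$ for a nonnegative boundary function $g_0\in L^{\theta_0}(\bR)$.

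For the existence of nontangential limits I would first invoke Theorem \ref{maximal characterization}: since $f\in\mathcal H^{\log}_a(\bC_+)$ we have $f^*\in L^{\log}(\bR)$, hence $f^*(x)<\infty$ for almost every $x$. Thus $f$ is nontangentially bounded at almost every boundary point, and the classical local Fatou theorem for holomorphic functions yields nontangential limits almost everywhere; this defines $f_0$. (Equivalently, $\log^+|f|\le 2g\le 2P_y*g_0$ shows $f$ lies in the Nevanlinna class, and one applies the Fatou theorem for that class.)

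For the second assertion the key reduction is that it suffices to establish the representation $f(x+iy)=P_y*f_0(x)$. Granting this, convolution estimates for the Poisson kernel give $\int_{\bR}|f(x+iy)|\,dx\le\|P_y\|_{L^1}\,\|f_0\|_{L^1}=\|f_0\|_{L^1}$ for every $y>0$, so that $f\in\mathcal H^1_a(\bC_+)$ with $\|f\|_{\mathcal H^1_a}\le\|f_0\|_{L^1}$. To prove $f=P[f_0]$ I would show that $f$ belongs to the Smirnov class $N^+(\bC_+)$ and then apply the half-plane version of Smirnov's theorem, which asserts that a function in $N^+$ with boundary values in $L^1(\bR)$ is the Poisson integral of those boundary values. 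The membership $f\in N^+$ is exactly where the hypothesis enters: from $\log^+|f|\le 2g\le 2P_y*g_0$, the least harmonic majorant $V$ of $\log^+|f|$ is a nonnegative harmonic function dominated by $2P[g_0]$. Since $g_0\ge0$ and $g_0\in L^{\theta_0}(\bR)\subset L^1(\tfrac{dx}{1+x^2})$ (a routine Young-type check), the function $P[g_0]$ is a genuine Poisson integral: its Herglotz representation has absolutely continuous boundary measure and no linear term $cy$. A nonnegative harmonic function dominated by such a quasi-bounded one has its representing measure dominated by an absolutely continuous one and its linear coefficient dominated by $0$, so $V$ is itself a Poisson integral, which is precisely the Smirnov condition.

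The main obstacle is this last step: one must rule out a ``singular factor at infinity'' (the phenomenon of $e^{-iz}$, whose boundary modulus is $1$ yet which grows exponentially in $y$ and fails to be in $N^+$). Such exponential growth is incompatible with the bound $g\le P_y*g_0$, because the Poisson integral of a fixed boundary function cannot grow exponentially in $y$; turning this domination-of-harmonic-majorants heuristic into the precise $N^+$ statement, together with a careful formulation of Smirnov's theorem adapted to the behavior at infinity on $\bC_+$, is the heart of the argument. Everything else — the convolution estimate and the Fatou limit that $P[f_0]$ returns $f_0$ nontangentially almost everywhere — is routine.
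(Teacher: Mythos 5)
Your argument is correct, but for the second assertion it takes a substantially heavier route than the paper. The paper stays entirely inside the device already set up in the proof of Theorem \ref{maximal characterization}: the subharmonic function $g=|f|^{1/2}$ satisfies $g(x+iy)\le P_y*g_0(x)$ with $g_0=|f_0|^{1/2}$ the boundary value of $g$; it then reduces to the case $f_0=0$ (using that $\mathcal H^1_a(\bC_+)\subset\mathcal H^{\log}_a(\bC_+)$, so one may subtract the $\mathcal H^1_a$ function with boundary value $f_0$), in which case the majorant vanishes and forces $f\equiv 0$. Note that the same majorant also yields your conclusion in two lines with no Smirnov theory: if $f_0\in L^1(\R)$ then $g_0\in L^2(\R)$ and
$$\int_\R |f(x+iy)|\,dx=\int_\R g(x+iy)^2\,dx\le \bigl\|P_y*g_0\bigr\|_{L^2}^2\le\|g_0\|_{L^2}^2=\|f_0\|_{L^1},$$
so $f\in\mathcal H^1_a(\bC_+)$ directly. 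Your route --- placing $f$ in $N^+(\bC_+)$ by dominating the least harmonic majorant of $\log^+|f|$ by the quasi-bounded function $2P[g_0]$, then invoking Smirnov's maximum principle --- is a legitimate classical alternative; the domination argument you give (no singular part and no $cy$ term, by positivity and uniqueness of the Herglotz representation) is exactly the right way to exclude the $e^{-iz}$-type obstruction you flag, and the inclusion $L^{\theta_0}(\R)\subset L^1\bigl(\frac{dx}{1+x^2}\bigr)$ you invoke does hold. What your approach buys is the stronger conclusion $f=P[f_0]$ with $\|f\|_{\mathcal H^1_a}\le\|f_0\|_{L^1}$, and independence from the reduction to $f_0=0$ (which in the paper tacitly requires knowing that $f_0$ is the boundary value of some $\mathcal H^1_a$ function); what it costs is importing the full Nevanlinna--Smirnov apparatus on the half-plane, which the inequality $|f|^{1/2}\le P_y*|f_0|^{1/2}$ renders unnecessary here. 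Your treatment of the first assertion (a.e.\ finiteness of $f^*$ plus the local Fatou theorem) is the standard argument that the paper omits as classical.
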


\begin{proof}
The proof of the existence of a. e. nontangential limits is quite similar to the classical one (see Garnett's book \cite{Ga}, see also \cite{BGK2} for more details). It will be omitted. Assume that $f_0$ is in $L^1(\bR)$. Since $\mathcal H^1_a(\bC_+)$ is a subspace of $\mathcal H^{\log}_a(\bC_+)$, we can assume that $f_0=0$. We proceed as above and consider the subharmonic function $g=|f|^{1/2}$ whose boundary values are $0$. This forces $f$ to be $0$.
\end{proof}

\section{Factorization of the space $\mathcal H^{\log}_a(\bC_+)$}

Since we consider functions in $BMOA(\bC_+)$ and not only equivalence classes, we define a norm on this space. For a function $f\in BMO(\mathbb R)$, following \cite{BIJZ}, we define the norm by
$$\|f\|_{BMO^+}:= \|f\|_{BMO} + \int_{I(0,1)} |f(x)|dx.$$

Let $f$ be a function in $BMOA(\bC_+)$ given by (\ref{Bonami 1}),
with $f_0\in BMO(\mathbb R)$. We define
$$\|f\|_{BMOA^+}:= \|f_0\|_{BMO^+}.$$

 Here and in future, we denote by $m_B f$ the average value of $f$ over the ball $B$. Constants $C$ may vary from line to line.

 The next lemma gives a bound of norms in $BMO(\bR)$ on lines that are parallel to the $x$-axis.

 \begin{lemma}\label{Bonami 4/6}
 Let $f$ be a function in $BMOA(\bC_+)$. Then there exists some constant $C$ such that, for all $y>0$,
  \begin{equation}\label{BKsmall}
  \|f(\cdot + iy)\|_{BMO^+}\leq C \log(e+y) \|f\|_{BMOA^+}.
  \end{equation}
\end{lemma}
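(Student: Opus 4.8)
The plan is to split the $BMO^+$ norm of $f(\cdot+iy)=P_y*f_0$ into its two defining pieces, the homogeneous seminorm $\|P_y*f_0\|_{BMO}$ and the anchoring term $\int_{I(0,1)}|P_y*f_0(x)|\,dx$, and to show that all of the logarithmic growth in $y$ comes from the latter. For the seminorm I would invoke the standard fact that convolution with the Poisson kernel is bounded on $BMO(\bR)$ uniformly in the scale, i.e. $\|P_y*f_0\|_{BMO}\le C\|f_0\|_{BMO}$ with $C$ independent of $y$. Uniformity is in fact forced by the dilation identity $P_y*[f_0(\lambda\,\cdot)](x)=(P_{\lambda y}*f_0)(\lambda x)$ together with the scale invariance of the $BMO$ seminorm, so no factor $\log(e+y)$ can appear at this stage.

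For the anchoring term I would first subtract the mean over $I(0,1)$. Writing $h=f_0-m_{I(0,1)}f_0$ and using $\int_\bR P_y(s)\,ds=1$, we have $P_y*f_0=P_y*h+m_{I(0,1)}f_0$, and the constant part contributes at most $|I(0,1)|\,|m_{I(0,1)}f_0|\le\int_{I(0,1)}|f_0|\le\|f_0\|_{BMO^+}$, with no logarithm. It then remains to bound $\int_{I(0,1)}|P_y*h(x)|\,dx$, and by Fubini this is at most $\int_\bR|h(t)|\,w(t)\,dt$, where $w(t)=\int_{I(0,1)}P_y(x-t)\,dx$ satisfies $w(t)\le 1$ everywhere and $w(t)\le Cy/(y^2+t^2)$ for $|t|\ge 2$ (by the mean value theorem applied to the two arctangents defining $w$).

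The heart of the matter is to estimate $\int_\bR|h(t)|w(t)\,dt$ by decomposing $\bR$ into the central piece $A_0=I(0,1)$ and the dyadic annuli $A_k=I(0,2^k)\setminus I(0,2^{k-1})$ for $k\ge 1$. The term $A_0$ is harmless since $\int_{A_0}|h|\le\int_{I(0,1)}|f_0-m_{I(0,1)}f_0|\le 2\|f_0\|_{BMO}$. On $A_k$ one controls $|h|$ through the two standard $BMO$ facts $\frac1{|I(0,2^k)|}\int_{I(0,2^k)}|f_0-m_{I(0,2^k)}f_0|\le\|f_0\|_{BMO}$ and $|m_{I(0,2^k)}f_0-m_{I(0,1)}f_0|\le Ck\|f_0\|_{BMO}$, which together give $\int_{A_k}|h|\le C2^k(k+1)\|f_0\|_{BMO}$, while $w\approx y/(y^2+2^{2k})$ on that annulus. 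I would then split the sum at the scale $2^k\approx y$: for $2^k\le y$ one uses $w\approx 1/y$, so the contribution is $\approx\frac1y\sum 2^k(k+1)$, a geometric-type sum dominated by its last term $\approx y\log y$; for $2^k> y$ one uses $w\approx y/2^{2k}$, so the contribution is $\approx y\sum(k+1)/2^k$, which again localizes near the cutoff and yields $\approx\log y$. Both regimes produce exactly one factor $\log(e+y)$, and assembling the pieces gives the claim. The main obstacle is precisely this balancing: the $BMO$ means grow linearly in the dyadic index $k$ while the Poisson tails decay in $k$, and the cutoff $2^k\approx y$ must be placed so that the linear growth contributes only a single logarithm rather than a power of $y$.
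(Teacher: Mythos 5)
Your argument is correct, and it reaches the same two estimates the paper relies on, but it organizes them differently. The paper does not subtract $m_{I(0,1)}f_0$ and dualize onto the weight $w$; instead it cuts $f_0$ itself at the single scale $a=2\max\{1,y\}$ into $f_0\chi_{I(0,a)}+f_0\chi_{I(0,a)^c}$, controls the near piece by the $L^1$-contractivity of the Poisson kernel (giving $m_{I(0,a)}(|f_0|)$ up to constants), and disposes of the far piece by quoting the two standard $BMO$ facts
$a\int_{\bR}\frac{|g(u)-m_{I(0,a)}g|}{a^2+|u|^2}\,du\leq C\|g\|_{BMO}$ and $m_{I(0,a)}g\leq m_{I(0,1)}g+C\log a\,\|g\|_{BMO}$, the second of which is the sole source of the $\log(e+y)$. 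Your dyadic decomposition into the annuli $A_k$ with the cutoff at $2^k\approx y$ is essentially a from-scratch proof of those two packaged inequalities: the linear-in-$k$ drift of the means $|m_{I(0,2^k)}f_0-m_{I(0,1)}f_0|\leq Ck\|f_0\|_{BMO}$ summed against the Poisson tails is exactly what the quoted lemmas encode. So your route is more self-contained and makes visible where the single logarithm comes from (the balancing at $2^k\approx y$ that you correctly identify as the crux), at the cost of length; the paper's is shorter because it cites the standard estimates. One cosmetic point: your bound $w(t)\leq Cy/(y^2+t^2)$ is only valid for $|t|\geq 2$, so on the first annulus $A_1=\{1\leq|t|<2\}$ (where $w$ can be of order $1$ even for small $y$) you should simply use $w\leq 1$ and absorb $\int_{A_1}|h|\leq C\|f_0\|_{BMO}$ into the harmless terms, exactly as you do for $A_0$; this does not affect the conclusion.
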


\begin{proof}
Let us first prove \eqref{BKsmall}. Since $\BMO(\bR)$ is invariant by translation, we already know that
$$\|f(\cdot+iy)\|_{BMO}= \|P_y*f_0\|_{BMO} \leq C \|f_0\|_{BMO}.$$
So it is sufficient to prove that
\begin{equation}\label{small}
\int_{I(0,1)} |f(x + iy)| dx = \int_{I(0,1)} |P_y * f_0(x)|dx \leq C \log(e+y) \|f_0\|_{BMO^+}.
\end{equation}
Remark that $y^2 + |x-u|^2 \sim a^2 + |u|^2$ for $ x\in I(0,1), u\notin I(0,a)$,  with $a:=2\max\{1, y\}$. We cut $f_0$ into $f_0\chi_{I(0, a)
)}+ f_0 \chi_{I(0, a)^c}$. Since the convolution by the Poisson kernel has norm $1$ in $L^1(\bR)$, the first $L^1$-norm is bounded by $m_{I(0,a)}(|f_0|)$. So
$$\int_{I(0,1)} |P_y * f_0(x)|dx\leq m_{I(0,a)}(|f_0|) +C a \int_{\mathbb R} \frac{|f_0(u)|}{a^2 + |u|^2} du.$$
We then use the standard inequalities, valid for all functions $g\in\BMO(\R)$,
$$a\int_{\mathbb R} \frac{|g(u)-m_{I(0, a)}g|}{a^2 + |u|^2} dt\leq C \|g\|_{BMO}$$
and
$$m_{I(0,a)}g\leq m_{I(0,1)}g+ C\log a \|g\|_{BMO}$$
to conclude that
$$\int_{I(0,1)} |P_y * f_0(x)|dx\leq C \log(e+y) \|f_0\|_{BMO^+}$$
since $a=2\max\{1,y\}\leq 2(e+y)$. This ends the proof.

\end{proof}

With this lemma we conclude for one side of the theorem.

\begin{proposition}\label{fill the gap}There exists  a constants $C$  such that for every $f\in \mathcal H^1_a(\bC_+)$ and $g\in BMOA(\bC_+)$, the pointwise product $fg$ is in $\mathcal H^{\log}_a(\bC_+)$ and satisfies
\begin{equation}\label{apriori}
\|fg\|_{\mathcal H^{\log}_a}\leq C \|f\|_{\mathcal H^1_a} \|g\|_{BMOA^+}.
\end{equation}
\end{proposition}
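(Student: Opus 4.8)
The plan is to estimate the product line by line. Since $\|fg\|_{\mathcal H^{\log}_a}=\sup_{y>0}\|f(\cdot+iy)\,g(\cdot+iy)\|_{L^{\log}}$, it is enough to bound $\|F_y G_y\|_{L^{\log}}$ \emph{uniformly} in $y$, where $F_y:=f(\cdot+iy)$ and $G_y:=g(\cdot+iy)$. Two facts are at my disposal: $\|F_y\|_{L^1}\le\|f\|_{\mathcal H^1_a}$ for every $y>0$, straight from the definition of $\mathcal H^1_a(\bC_+)$, and Lemma~\ref{Bonami 4/6}, which gives $\|G_y\|_{BMO^+}\le C\log(e+y)\|g\|_{BMOA^+}$. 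The whole point is that the target inequality must be uniform in $y$, whereas Lemma~\ref{Bonami 4/6} carries a factor $\log(e+y)$; this factor is exactly the ``behavior at infinity'' mentioned in the introduction, and it has to be absorbed by the spatial weight $\log(e+|x|)$ hidden in $\theta$.

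The main tool, which I would import from (or reprove in the spirit of) \cite{BGK}, is a generalized H\"older inequality for the Musielak--Orlicz space $L^{\log}$: there is a constant $C$ with $\|hb\|_{L^{\log}}\le C\|h\|_{L^1}\|b\|_{BMO^+}$ for all $h\in L^1(\bR)$ and $b\in BMO(\bR)$. Its mechanism is the John--Nirenberg inequality, which makes $b-m_I b$ exponentially integrable on each interval $I$, together with the weight $\log(e+|x|)$, which offsets the at most logarithmic growth of the means $m_{I(0,|x|)}b$ as $|x|\to\infty$. Note that applying it directly with $b=G_y$ would reinsert the unwanted factor $\log(e+y)$ through $\|G_y\|_{BMO^+}$, so a preliminary splitting is needed.

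I therefore peel off the part of $G_y$ responsible for the growth. Put $m:=m_{I(0,1)}G_y$ and $\widetilde G_y:=G_y-m$. Subtracting a constant does not change the $BMO$ seminorm and forces the local $L^1$ term to be dominated by it, so $\|\widetilde G_y\|_{BMO^+}\le C\|G_y\|_{BMO}\le C\|g\|_{BMOA^+}$ \emph{uniformly} in $y$; the generalized H\"older inequality then yields $\|F_y\widetilde G_y\|_{L^{\log}}\le C\|f\|_{\mathcal H^1_a}\|g\|_{BMOA^+}$. For the remaining piece, homogeneity of the Luxemburg functional gives $\|m F_y\|_{L^{\log}}=|m|\,\|F_y\|_{L^{\log}}$, while $|m|\le C\log(e+y)\|g\|_{BMOA^+}$ by Lemma~\ref{Bonami 4/6} and, since $\theta(x,t)\le t/\log(e+|x|)$, $\|F_y\|_{L^{\log}}\le\int_{\bR}\frac{|F_y(x)|}{\log(e+|x|)}\,dx$. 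Recombining the two pieces by the subadditivity (up to a constant) of $t\mapsto\theta(x,t)$, everything reduces to the weighted estimate
\[
\log(e+y)\int_{\bR}\frac{|f(x+iy)|}{\log(e+|x|)}\,dx\le C\|f\|_{\mathcal H^1_a},\qquad y>0 .
\]

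This last estimate is the crux, and I expect the organization \emph{around} it to be the main obstacle rather than its proof, which is short. Writing $|f(x+iy)|\le P_y*|f_0|(x)$ with $\|f_0\|_{L^1}\le\|f\|_{\mathcal H^1_a}$ and applying Fubini, it suffices to show $\log(e+y)\int_{\bR}\frac{P_y(x-u)}{\log(e+|x|)}\,dx\le C$ uniformly in $u\in\bR$ and $y>0$. For $y\le1$ this is clear, the weight being bounded below by $1$. For $y\ge1$ I would split the integral at $|x|=\sqrt y$: on $\{|x|<\sqrt y\}$ the Poisson mass is at most $\|P_y\|_\infty\cdot 2\sqrt y\le C/\sqrt y$, while on $\{|x|\ge\sqrt y\}$ one has $\log(e+|x|)\ge\tfrac12\log(e+y)$, so the integral is at most $C/\sqrt y+C/\log(e+y)$; multiplying by $\log(e+y)$ gives a bound uniform in $y$ and $u$, since $\log(e+y)/\sqrt y$ is bounded. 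The genuine content is thus that the two logarithms---the temporal $\log(e+y)$ produced by $BMOA$ and the spatial $\log(e+|x|)$ built into $L^{\log}$---cancel; the cancellation of $f$ plays no role in this direction, only the fact that the Poisson kernel spreads the $L^1$-mass of $F_y$ out to the scale $|x|\sim y$.
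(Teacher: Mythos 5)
Your argument is correct, and it takes a genuinely different route from the paper's. Both proofs rest on the same two ingredients --- the generalized H\"older inequality $\|hb\|_{L^{\log}}\leq C\|h\|_{L^1}\|b\|_{BMO^+}$ imported from \cite{BGK}, and Lemma \ref{Bonami 4/6} --- but they dispose of the parasitic factor $\log(e+y)$ in different ways. The paper first runs a density argument (continuous functions with $\sup_z(1+|z|)^3|f(z)|<\infty$) to show that $h=fg$ lies in $\mathcal H^1_a(\bC_+)$, hence is the Poisson integral of its boundary value; combined with the subharmonicity of $|h|^{1/2}$ and the convexity of $\theta_0$, this makes $y\mapsto\|h(\cdot+iy)\|_{L^{\log}}$ decreasing, so the supremum is controlled by its values for $y\leq 1$, where Lemma \ref{Bonami 4/6} carries no logarithmic loss; a final routine step checks that the continuous extension coincides with pointwise multiplication. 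You instead work on each horizontal line separately, subtract $m_{I(0,1)}g(\cdot+iy)$ so that the H\"older inequality applies with a $y$-uniform constant, and absorb the constant term through the weighted Poisson estimate $\log(e+y)\int_{\bR}P_y(x-u)\,\log(e+|x|)^{-1}dx\leq C$; I have checked that your split at $|x|=\sqrt{y}$ works, since $2\log(e+\sqrt{y})\geq\log(e+y)$ and $\log(e+y)/\sqrt{y}$ is bounded. Your route avoids both the density argument and the monotonicity-of-norms lemma (you never need the product itself to have a Poisson representation, only $f$ and $g$ individually), and it makes explicit the cancellation between the temporal logarithm produced by $BMOA$ and the spatial logarithm built into $\theta$; the paper's route avoids the kernel computation and yields the Poisson representation of the product as a by-product. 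Two cosmetic points: the inequality $\theta(x,t)\leq t/\log(e+|x|)$ holds only up to a multiplicative constant, since $1+\log_+|x|$ and $\log(e+|x|)$ are merely comparable (they differ for $|x|\leq 1$); and the recombination of the two pieces uses that $\|\cdot\|_{L^{\log}}$ is only a quasi-norm, which, as you note, follows from the lower-type property of $\theta(x,\cdot)$.
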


\begin{proof}
We start by an a priory estimate. From Corollary 3.3 in the book of Garnett \cite{Ga}, we know that continuous functions  $f$ on $\overline{\bC_+}$ such that
$$A(f):=\sup_{z\in\bC_+}(1+|z|)^3 |f(z)|<\infty$$
are dense. Using this last assumption and \eqref{BKsmall}, we find that
$$\int_{\bR} |f(x+iy)||g(x +iy)|dx\leq \frac{  A(f) }{1+y}\int_{\bR}\frac{|g(x+iy)|}{1+x^2}dx\leq CA(f)\|g\|_{BMOA^+}.$$
So the product $h:=fg$ is in $\mathcal H^1_a(\bC_+)$ and  $h(\cdot+iy)=P_y*h_0$, where $h_0$ is the boundary value of $h$. By the same argument as in the proof of Theorem \ref{maximal characterization}, and using the fact that $\theta_0(\cdot, t)$ is convex, we prove that $h(\cdot+iy)=P_y*h_0$   has decreasing norms in $\mathcal H^{\log}_a(\bC_+)$. So
$$
\|h\|_{\mathcal H^{\log}_a(\bC_+)}= \sup_{y\leq 1}\|h(\cdot+iy)\|_{ L^{\log}}.$$
Now we use the fact that the product of a function in $L^1(\R)$ with a function in $\BMO(\R)$ is bounded (see \cite{BGK}) to conclude for the a priori estimate \eqref{apriori}.

Multiplication by $h$ extends to the whole space $\mathcal H^1_a(\bC_+)$ by continuity. It remains to prove that the extension coincides with the multiplication by $h$. This can be done by a routine argument:  convergence in $\mathcal H^{\log}_a(\bC_+)$ implies uniform convergence on compact sets.
\end{proof}

\begin{proof}[\bf Proof of Theorem \ref{the main theorem}]
We have to prove that for every $h\in \mathcal H^{\log}_a(\bC_+)$, there exist $f\in \mathcal H^1_a(\bC_+)$ and $g\in BMOA(\bC_+)$ for which $h= fg$. The proof is very similar to the one of \cite{BIJZ}. Let $h_0$ be the boundary value function of $h$. By the Coifman-Rochberg theorem \cite{CR}, we have
$$b:= \log(e+ |\cdot|) + \log(e + M(|h_0|^{1/2}))\in BMO(\mathbb R).$$

Let $H$ be the Hilbert transform in $\R$. One knows that it can be defined as a continuous operator on $BMO(\R)$. We define $g$ as the Poisson integral of $b + i Hb$, so that $g$ belongs to $BMOA(\bC_+)$ and has $b + i Hb$ as boundary value function. We claim that
$$f= h/g\in \mathcal H^1_a(\bC_+).$$
Indeed, since $b\geq 1$ and $h\in \mathcal H^{\log}_a(\bC_+)$, we obtain that $f\in \mathcal H^{\log}_a(\bC_+)$. Moreover, $f$ has
the boundary value function $f_0={h_0}(b + i Hb)^{-1}$. We write
\begin{eqnarray*}
f_0(x) &\leq& \frac{h^*(x)}{\log(e+|x|) + \log(e + M(|h_0|^{1/2})(x))}\\
&\leq& C\frac{h^*(x)}{\log(e+|x|) + \log(e + (h^*)^{1/2}(x))},
\end{eqnarray*}
where we have used the inequality (\ref{a maximal estimate 3/6}). We use Theorem \ref{maximal characterization} to conclude that $f_0$ is in $L^1(\R)$, then  Theorem \ref{a theorem for boundary value for Hardy spaces}  to conclude that $f\in \mathcal H^1_a(\bC_+)$. This ends the proof.

\end{proof}

\section{Hankel operators and conclusion}
Let us now give a sketch of the proof of Theorem \ref{hankel}. It is equivalent to prove that the Hankel form defined by
$$H_b(f, g):=\langle b, fg\rangle$$
is bounded on  $\mathcal H^1_a(\bC_+) \times \BMOA(\bC_+)$ if and only if $b$ belongs to $\BMOA^{\log}(\bC_+)$. This is a straightforward consequence of the main theorem once one knows that $\BMOA^{\log}(\bC_+)$ is the dual of the space $\mathcal H_a^{\log}(\bC_+)$. The duality has been proven in \cite{Ky} for the real Hardy space $\H^{\log}(\R)$, as well as the continuity of the Hilbert transform, which implies the required duality result. We refer to \cite{BGK, Ky1, Ky} for the definitions and their applications in studying of commutators of singular integral operators.

\bigskip

The main theorem implies also that, on the real line, the embedding of products of functions in $\mathcal H^{1}(\R)$ and $\BMO(\R)$ in $L^1(\R)+\mathcal H^{\log}(\R)$ is sharp: any real function which can be written as the sum of an integrable function and a function in
$\mathcal H^{\log}(\R)$ can also be written as a sum $f_1 g_1+f_2 g_2$, with $f_1$ and $f_2$ in $\mathcal H^{1}(\R)$, $g_1$ and $g_2$ in $\BMO(\R)$. The proof is the same as for the unit disc in \cite{BIJZ}.

\bigskip

One may ask whether results can be generalized to the Siegel domain that is holomorphically equivalent to the unit ball. This is the case for Proposition \ref{fill the gap}. But the converse, with the construction of a function in $\BMOA(\bC_+)$ from its real part, cannot be generalized in higher dimension.


\begin{thebibliography}{MTW1}

\bibitem{BG} A. Bonami and S. Grellier, Hankel operators and weak factorization for Hardy-Orlicz spaces. Colloq. Math. 118 (2010), no. 1, 107--132.

\bibitem{BGK}  A. Bonami, S. Grellier and L. D.  Ky, Paraproducts and products of functions in $BMO(\mathbb R^n)$ and $H^1(\mathbb R^n)$ through wavelets. J. Math. Pures Appl. (9) 97 (2012), no. 3, 230--241.

\bibitem{BGK2}  A. Bonami, S. Grellier and L. D.  Ky, Hardy spaces of Musielak-Orlicz type on the half-plane, preprint.

\bibitem{BIJZ} A. Bonami, T. Iwaniec, P. Jones and  M. Zinsmeister, On the product of functions in BMO and $H^1$, Ann. Inst. Fourier (Grenoble), 57 (2007), no. 5, 1405--1439.

\bibitem{CCYY} J. Cao, D-C. Chang, D. Yang and S. Yang, Riesz Transform Characterizations of Musielak-Orlicz-Hardy Spaces, arXiv:1401.7373.

\bibitem{CR} R. R. Coifman and R. Rochberg, Another characterization of $BMO$. Proc. Amer. Math. Soc. 79 (1980), no. 2, 249--254.


\bibitem{Ga} J. B. Garnett, Bounded analytic functions. Academic Press, New York (1981).

\bibitem{Ky1} L. D. Ky, Bilinear decompositions and commutators of singular integral operators. Trans. Amer. Math. Soc. 365 (2013), no. 6, 2931--2958.

\bibitem{Ky} L. D. Ky, New Hardy spaces of Musielak-Orlicz type and boundedness of sublinear operators. Integral Equations Operator Theory 78 (2014), no. 1, 115--150.

\bibitem{LHY} Y. Liang, J. Huang and D. Yang,  New real-variable characterizations of Musielak-Orlicz Hardy spaces. J. Math. Anal. Appl. 395 (2012), no. 1, 413--428.



\end{thebibliography}
\end{document}